\newtheorem{lemma}{Lemma}
\theoremstyle{definition}
\theoremstyle{plain}
\newtheorem*{theor}{Theorem}
\def\cite#1{[#1]}
\def\beq{\begin{equation}}
\def\eeq{\end{equation}}
\begin{document}

 \renewcommand{\headrulewidth}{0.5pt}
 
 \title{\bf  Empirical Processes and Schatte Model}
\author{Marko Raseta$^{\rm 1}$ }
\date{}
 \maketitle

\renewcommand{\thefootnote}{}

\footnote{ {\it 2010 Mathematics Subject Classification.}  Primary 60F17, 42A55, 42A61.}

\footnote{ {\it Keywords.} Law of the iterated logarithm, lacunary series, random indices.}

\footnote{$^{\rm 1)}$Department of Mathematics, University of York Email: {\tt
marko.raseta@york.ac.uk}}

Let $(X_n)_{n \in \mathbb N}$ be a sequence of independent, identically distributed random variables
and $S_k = X_1 + \dots + X_k$. Put $f_a(x) = I \{x \leq a\} - a$. Furthermore, let $F_n(t)= F_n(t, x)$ stand for
the empirical distribution function of the sample $\{S_1 x\},\dots, \{S_n x\}$.

\begin{theor}
Let $(X_n)_{n \in \mathbb N}$ be as above and suppose $X_1$ is bounded with bounded density.
Let
$$
\Gamma(s,t) = s(1- t) + \sum_{\varrho = 1}^\infty \mathbb E f_s(U) f_t(U + S_\varrho x) + \sum_{\varrho = 1}^\infty \mathbb E 
f_t(U) f_s(U + S_\varrho x),
$$
where $U$ is a $U(0, 1)$ variable independent of $(X_n)_{n \in \mathbb N}$.
Then for all fixed $x\ne 0$, and after suitably enlarging the probability space, there exist mean zero Gaussian processes 
$K_n(s)$ with covariance function $\Gamma_n(s,t)$ such that
\beq
\label{1}
\sup_{s,t \in [0,1]^2} \bigl|\Gamma_n(s,t) - \Gamma(s,t)\bigr|= O(n^{-\alpha}) \ \text{ for all } \alpha < 1/8
\eeq
and
\begin{gather}
\label{2}
\sup_{0 \leq s\le 1} \left|\sqrt{n} \left(F_n(s) - s\right) - K_n(s)\right| = O(n^{-\gamma}) \qquad
\text{a.s.\ for all } \gamma < 1/16.
\end{gather}

\end{theor}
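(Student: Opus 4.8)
I would first note that $\sqrt n\,(F_n(s)-s)=n^{-1/2}\sum_{k=1}^n f_s(S_kx)$, with $f_s$ read as its $1$-periodic extension $f_s(y)=I\{\{y\}\le s\}-s$, so the statement is a strong approximation for the empirical process of the Schatte sequence $\{S_1x\},\dots,\{S_nx\}$. The engine is that a bounded density for $X_1$ forces $\varrho_x:=\sup_{m\ge1}|\varphi(2\pi mx)|<1$ for every fixed $x\neq0$ (Riemann--Lebesgue for large $m$; $|\varphi|<1$ off the origin for small $m$, $\varphi$ being the characteristic function of $X_1$). Hence the density $g_j$ of $\{S_jx\}$ obeys $\|g_j-1\|_2\le C_x\varrho_x^{\,j}$ and $\sup_j\|g_j\|_\infty<\infty$, and, expanding $f_s$ with $\widehat f_s(m)=(1-e^{-2\pi ims})/(2\pi im)$, a short computation gives, for $1\le j<k$ and $\varrho=k-j$,
$$
\mathbb E\,f_s(S_jx)f_t(S_kx)=\mathbb E\,f_s(U)f_t(U+S_\varrho x)+O\!\left(\varrho_x^{\,j+\varrho}\right)
$$
uniformly in $s,t\in[0,1]$, while $\mathbb E f_s(U)f_t(U+S_\varrho x)=\sum_{m\neq0}\widehat f_s(m)\overline{\widehat f_t(m)}\,\varphi(2\pi mx)^\varrho=O(\varrho_x^{\,\varrho})$, so the two series in $\Gamma$ converge absolutely and uniformly. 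Summing the display over $1\le j<k\le n$, adding its mirror and the diagonal contribution $s\wedge t-st$ ($=s(1-t)$ for $s\le t$), yields
$$
\frac1n\sum_{j,k=1}^n\mathbb E\,f_s(S_jx)f_t(S_kx)=\Gamma(s,t)+O(n^{-1})\qquad\text{uniformly on }[0,1]^2 .
$$

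\emph{Blocking and discretisation.} I would cut $\{1,\dots,n\}$ into alternating long blocks $H_1,\dots,H_{m_n}$ of length $\sim p_n$ (a small power of $n$, $m_n\sim n/p_n$) and short gaps of length $\sim q_n\asymp\log n$, and set $V_i(s)=n^{-1/2}\sum_{k\in H_i}f_s(S_kx)$. The exponential bounds above (conditional quasi-uniformity of $\{S_jx\}$ given the past) show that the joint law of $(V_1,\dots,V_{m_n})$ lies within $O(m_n^2\varrho_x^{\,q_n})=o(n^{-C})$ of that of an \emph{independent} family $(V_1^\ast,\dots,V_{m_n}^\ast)$ with identical marginals, while the discarded gap terms contribute $O(\sqrt{q_n/p_n})$ uniformly in $s$. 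Both estimates rely on the oscillation bound
$$
\mathbb E\sup_{0\le s<t\le1,\;t-s\le h}\Bigl|\sum_{k\in G}\bigl(f_s(S_kx)-f_t(S_kx)\bigr)\Bigr|^2\le C\bigl(|G|\,h+1\bigr),
$$
a second-moment computation using $\sup_j\|g_j\|_\infty<\infty$, together with Dudley's chaining bound; the same estimate shows that snapping $s$ to the nearest point of a grid of mesh $\delta_n$ costs $O\bigl(\sqrt{\delta_n\log(1/\delta_n)}\bigr)$ in sup-norm.

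\emph{Gaussian approximation and conclusions.} On the grid the $V_i^\ast$ are independent bounded vectors in $\mathbb R^{\lceil1/\delta_n\rceil}$, centred up to $O(\varrho_x^{\min H_i})$; applying a Berkes--Philipp-type multivariate strong approximation (Strassen--Dudley plus a multidimensional Berry--Esseen bound) to $\sum_iV_i^\ast$ and interpolating linearly in $s$ produces a mean-zero Gaussian process $K_n$ with $\mathrm{Cov}(K_n(s),K_n(t))=\Gamma_n(s,t):=\sum_{i=1}^{m_n}\mathrm{Cov}(V_i(s),V_i(t))$ and
$$
\sup_{0\le s\le1}\bigl|\,n^{-1/2}\textstyle\sum_{k\le n}f_s(S_kx)-K_n(s)\bigr|\lesssim\sqrt{q_n/p_n}+\sqrt{\delta_n\log(1/\delta_n)}+(\text{coupling error}).
$$
Optimising $p_n$ and $\delta_n$, then running Borel--Cantelli along a geometric subsequence and using monotonicity of $F_n$ between consecutive indices, gives $O(n^{-\gamma})$ a.s.\ for every $\gamma<1/16$, which is \eqref{2}. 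For \eqref{1}, $\Gamma_n$ differs from $n^{-1}\sum_{j,k\le n}\mathbb E f_s(S_jx)f_t(S_kx)$ only through the removed gap indices ($O(q_n/p_n)$) and the cross-block truncation and mean-corrections (negligible since $q_n\asymp\log n$), so together with the $O(n^{-1})$ above we get $\sup_{[0,1]^2}|\Gamma_n-\Gamma|=O(n^{-\alpha})$ for every $\alpha<1/8$.

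\emph{Main difficulty.} The crux is the uniform-in-$s$ strong approximation: the coupling error in the Berkes--Philipp step grows with the grid dimension $1/\delta_n$, while the discretisation error shrinks with $\delta_n$, and both must be simultaneously beaten by the blocking and gap errors; it is precisely this balancing of $p_n,q_n,\delta_n$ that fixes the exponents $1/8$ and $1/16$. The subsidiary technical point is the uniform oscillation estimate for the partial sums of $f_s(S_kx)$, which rests on the uniform boundedness of the densities $g_j$ and a routine chaining argument.
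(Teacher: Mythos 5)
Your plan follows the same architecture as the paper's proof: block the index set into long blocks separated by short gaps, discretise $s$ on a grid of mesh $\delta_n$, apply a multivariate Gaussian strong approximation (you cite Berkes--Philipp; the paper uses Lemma~4 of Berthet and Mason) to the resulting independent block vectors, control the oscillation of the process between grid points, and close with Borel--Cantelli. The implementation differs in three respects. First, the paper does not settle for an approximate total-variation coupling of the block sums: its Lemma~1 produces random shifts $\Delta_{k-1}$ with $|\Delta_{k-1}|\le Ce^{-\lambda\lfloor n^\beta\rfloor}$ such that the perturbed block sums $\sum_{j\in I_k}f_t(S_jx-\Delta_{k-1})$ are \emph{exactly} independent; this is cleaner than your ``within $O(m_n^2\varrho_x^{q_n})$ of an independent family'' step, though both are workable. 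Second, the paper takes the short blocks of polynomial length $\lfloor n^\beta\rfloor$ and runs them through the same Gaussian approximation, producing a second process $K_n^*$ that is then shown to be $o(n^{-\gamma})$, rather than discarding logarithmic gaps by a moment bound. Third, the oscillation control between grid points is done via Bernstein's inequality combined with the variance bound $A^{(f_{t-s})}\le B(t-s)\log((t-s)^{-1})$ (Lemma~3), not via an $L^2$ chaining estimate; incidentally, your claimed oscillation bound $C(|G|h+1)$ is missing the $\log(1/h)$ factor that the covariance series genuinely contributes, though this does not affect polynomial rates.

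The one genuine gap is quantitative: the exponents $1/8$ and $1/16$ are asserted, not derived. They are not generic consequences of ``balancing $p_n,q_n,\delta_n$''; they come out of the \emph{specific} dependence of the coupling error on the grid dimension and the block length. In the paper this is the bound $\mathbb P\{|\sum_j(\boldsymbol{\xi}_j-\boldsymbol{\xi}_j^*)|>v\}\le C_1r_n^2\exp\left(-C_2v/n^{5\varepsilon/2+\alpha}\right)$, whose exponent $5\varepsilon/2+\alpha$ feeds the constraint $1/2-\gamma>5\varepsilon/2+\alpha$; combined with $\gamma<\varepsilon/2$ and $\varepsilon>\alpha$ this yields $\gamma<\min(\alpha/2,\,(1-7\alpha)/2)$ and hence $\alpha=1/8$, $\gamma<1/16$. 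Since you never specify how your Strassen--Dudley/Berry--Esseen coupling error scales with $1/\delta_n$ and $p_n$, the claim that your optimisation reproduces exactly these exponents is unsupported; a different (and entirely plausible) dimension dependence in the coupling lemma would give different exponents. To complete the argument you would need to state the multivariate approximation lemma you intend to use with its explicit error, and carry out the linear program in $(\alpha,\beta,\varepsilon,\gamma)$ as the paper does.
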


\bigskip
Without loss of generality, it suffices to prove the theorem for $x=1$. We give some lemmas of which the first was proved in 
our
previous manuscript.

\begin{lemma}
\label{lem:1}
Let $f: \mathbb R \to \mathbb R$ denote a bounded measurable function satisfying
\begin{equation}\label{f}
f(x + 1) = f(x), \ \ \int\limits_0^1 f(x) dx = 0 \ \text{ and } \ \biggl(\int\limits_0^1 \bigl| f(x + h) - f(x) \bigr|^2 
dx\biggr)^{1/2} \leq Ch
\end{equation}
for some positive constant $C$.
Fix an integer $\ell \geq 1$ and define a sequence of sets by:

\goodbreak

\begin{align*}
I_1 &:= \{1,2,\dots, b\}\\
I_2 &:= \{p_1, p_1 + 1, \dots, p_1 + b_1\} \ \text{ where } p_1 \geq b + \ell + 2\\
\vdots &\\
I_n &:= \bigl\{p_{n - 1}, p_{n - 1} + 1, \dots, p_{n - 1} + b_{n - 1}\bigr\} \text{ where } p_{n - 1} \geq p_{n - 2} + b_{n - 
2} + \ell + 2.
\end{align*}
Then there exists a sequence $\delta_1, \delta_2, \dots$ of random variables, not depending on $f$, satisfying the following 
properties:
\begin{itemize}
\itemsep=0pt
\item[{\rm (i)}] $|\delta_n| \leq Ce^{-\lambda \ell}$ for all $n \in \mathbb N$ and some positive constants $C, 
    \lambda$.

\item[{\rm (ii)}] The random variables
\[
\sum_{i \in I_1} f(S_i), \sum_{i \in I_2} f(S_i - \delta_1), \dots, \sum_{i \in I_n} f(S_i - \delta_{n - 1})
\]
are independent.\\
\end{itemize}
\end{lemma}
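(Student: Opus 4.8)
The plan is to decouple the blocks by a conditional quantile (Rosenblatt) coupling, exploiting the smoothing effect of the at least $\ell+1$ random-walk increments separating consecutive sets $I_k$. Write $r_k$ and $s_k$ for the first and last index of $I_k$, so that $r_1=1$, $s_1=b$, and $r_k\ge s_{k-1}+\ell+2$, and let $W_{k-1}:=X_{s_{k-1}+1}+\dots+X_{r_k-1}$ be the gap sum, which is a sum of at least $\ell+1$ i.i.d.\ copies of $X_1$ and is independent of $\sigma(X_1,\dots,X_{s_{k-1}})$ and of $(X_{r_k},\dots,X_{s_k})$. The aim is to produce, on the original space, variables $V_{k-1}$ that are exactly $U(0,1)$ and exactly independent of the past, together with small corrections $\delta_{k-1}$ with $\{S_{r_k-1}-\delta_{k-1}\}=V_{k-1}$; then periodicity of $f$ turns the $k$-th block sum into a fixed function of the package $P_k:=(V_{k-1},X_{r_k},\dots,X_{s_k})$, and the $P_k$ will be independent. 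Only periodicity of $f$ is used, so the same $\delta$'s work for every admissible $f$.

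The first ingredient is a density estimate. Since $X_1$ has a bounded density on a bounded set, $\{X_1\}$ has a bounded density $\psi_1\in L^2[0,1]$, so by Parseval $\sum_{j\in\mathbb Z}|\varphi(2\pi j)|^2=\|\psi_1\|_{L^2[0,1]}^2<\infty$, where $\varphi$ is the characteristic function of $X_1$; and since $X_1$ is non-lattice, $\theta:=\sup_{j\ne0}|\varphi(2\pi j)|<1$ (the supremum is attained over a finite range of $j$, by Riemann--Lebesgue). Hence, if $\psi_m$ denotes the density on $[0,1)$ of $\{X_1+\dots+X_m\}$, whose $j$-th Fourier coefficient has modulus $|\varphi(2\pi j)|^m$, then for $m\ge2$
\[
\|\psi_m-1\|_\infty\le\sum_{j\ne0}|\varphi(2\pi j)|^m\le\theta^{m-2}\sum_{j\in\mathbb Z}|\varphi(2\pi j)|^2=\theta^{m-2}\|\psi_1\|_{L^2[0,1]}^2 .
\]
Since every gap has at least $\ell+1\ge2$ increments, this gives $\|\psi_m-1\|_\infty\le Ce^{-\lambda\ell}$ with $\lambda=-\log\theta>0$; the bound survives any translation $\psi_m(\cdot-y)$ and hence any average over $y$.

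The construction now proceeds by induction on $k$, maintaining $\mathcal G_k:=\sigma(P_1,\dots,P_k)\subseteq\sigma(X_1,\dots,X_{s_k})$. Put $P_1:=(X_1,\dots,X_b)$. Given $\mathcal G_{k-1}$, let $q_{k-1}$ be the conditional density on $[0,1)$ of $\{S_{r_k-1}\}=\{S_{s_{k-1}}+W_{k-1}\}$ given $\mathcal G_{k-1}$; since $W_{k-1}$ is independent of $\mathcal G_{k-1}\vee\sigma(S_{s_{k-1}})$ with the law of a sum of at least $\ell+1$ copies of $X_1$, $q_{k-1}$ is an average of translates of the corresponding $\psi_m$, hence continuous with $\|q_{k-1}-1\|_\infty\le Ce^{-\lambda\ell}$. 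Let $H_{k-1}$ be the associated continuous distribution function on $[0,1]$ and set
\[
V_{k-1}:=H_{k-1}\bigl(\{S_{r_k-1}\}\bigr),\qquad \delta_{k-1}:=\{S_{r_k-1}\}-V_{k-1},\qquad P_k:=(V_{k-1},X_{r_k},\dots,X_{s_k}).
\]
Then $|\delta_{k-1}|=\bigl|\{S_{r_k-1}\}-H_{k-1}(\{S_{r_k-1}\})\bigr|\le\int_0^1|q_{k-1}-1|\le Ce^{-\lambda\ell}$ for every $k$, and $\delta_{k-1}$ is a function of the $X_i$ alone, so it does not depend on $f$; this proves (i). Also $V_{k-1}\in\sigma(X_1,\dots,X_{r_k-1})$, so $\mathcal G_k\subseteq\sigma(X_1,\dots,X_{s_k})$, preserving the inductive hypothesis.

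For (ii), note that for $i\in I_k$ one has $S_i-\delta_{k-1}=\lfloor S_{r_k-1}\rfloor+V_{k-1}+X_{r_k}+\dots+X_i$, so by periodicity of $f$ the $k$-th sum equals $\sum_{i=r_k}^{s_k}f(V_{k-1}+X_{r_k}+\dots+X_i)$ (and $\sum_{i\in I_1}f(S_i)=\sum_{i=1}^{b}f(X_1+\dots+X_i)$), a fixed measurable function of $P_k$; hence it suffices that $P_1,P_2,\dots$ be independent. The decisive step is the conditional probability integral transform: since $H_{k-1}$ is a continuous conditional distribution function of $\{S_{r_k-1}\}$ given $\mathcal G_{k-1}$, the variable $V_{k-1}=H_{k-1}(\{S_{r_k-1}\})$ is $U(0,1)$ and independent of $\mathcal G_{k-1}$; combining this with the independence of $(X_{r_k},\dots,X_{s_k})$ from $\sigma(X_1,\dots,X_{r_k-1})\supseteq\mathcal G_{k-1}\vee\sigma(V_{k-1})$ shows that the conditional law of $P_k$ given $\mathcal G_{k-1}$ is a fixed law (namely $U(0,1)$ times the law of $b_{k-1}+1$ copies of $X_1$), so $P_k$ is independent of $\sigma(P_1,\dots,P_{k-1})$, closing the induction. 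The part that I expect to require the most care is precisely this bookkeeping: that conditioning on $\mathcal G_{k-1}$ leaves $W_{k-1}$ with its full law (so that $q_{k-1}$ really is an average of translates of $\psi_m$ and the $\delta$-bound is uniform in $k$), and that the Rosenblatt transform yields a $V_{k-1}$ that is uniform \emph{and} jointly independent of the entire past together with the fresh within-block increments, rather than only independent of $\mathcal G_{k-1}$ for each frozen value of the conditioning. The Fourier density estimate itself is routine once $\theta<1$ and $\sum_j|\varphi(2\pi j)|^2<\infty$ are in hand.
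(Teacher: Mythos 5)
The paper itself does not reproduce a proof of Lemma~\ref{lem:1} (it is imported from a previous manuscript), but your argument is correct and is essentially the canonical one for the Schatte model: a Fourier/characteristic-function estimate showing that the conditional density of $\{S_{r_k-1}\}$ given the earlier blocks is within $Ce^{-\lambda \ell}$ of $1$ in sup norm (using $\sup_{j\ne 0}|\varphi(2\pi j)|<1$ and square-integrability of the density of $\{X_1\}$), followed by the conditional quantile coupling that replaces $\{S_{r_k-1}\}$ by an exactly uniform $V_{k-1}$ independent of the past, with $\delta_{k-1}=\{S_{r_k-1}\}-V_{k-1}$ bounded by $\int_0^1|q_{k-1}-1|$ and depending only on the $X_i$, hence not on $f$. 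The points you flag as delicate --- joint measurability of the regular conditional distribution function $H_{k-1}$, preservation of the density bound under averaging over translates, and the factorization of the conditional law of $P_k$ via the independence of $(X_{r_k},\dots,X_{s_k})$ from $\sigma(X_1,\dots,X_{r_k-1})$ --- are indeed the only bookkeeping required, and your treatment of them is sound.
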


Partition the interval $[0, n]$ into disjoint blocks,
\[
[0, n] = I_1 \cup J_1 \cup I_2 \cup J_2 \cup \dots \cup I_\ell \cup J_\ell
\]
where $|I_k| = \lfloor n^\alpha \rfloor$, $|J_k| = \lfloor n^\beta\rfloor$, $\ell \sim n^{1 - \alpha}$ for some positive 
reals
$\alpha, \beta$ to be chosen later.
Using Lemma~1 we can construct sequences $\bigl({\Delta_k}\bigr)_{k \in \mathbb N}$ and $\bigl({\Pi_k}\bigr)_{k \in \mathbb 
N}$ of random variables such that
\[
{\Delta_0} = {\Pi_0} = 0,
\]
\[
\bigl|{\Delta_k}\bigr| \leq C e^{-\lambda \lfloor n^\beta\rfloor}, \ \bigl|{\Pi_k}\bigr| \leq C e^{-\lambda \lfloor 
n^\alpha\rfloor}
\]
for $k = 1, \dots, \ell$ and
\begin{align*}
T_k^{(f_t)} &:= \sum_{j \in I_k} f_t \bigl(S_j x - {\Delta_{k - 1}}\bigr) \ \text{ and }\\
T_k^{*(f_t)} &:= \sum_{j \in J_k} f_t \bigl(S_j x - {\Pi_{k - 1}}\bigr)
\end{align*}
are sequences of independent random variables. Put
\begin{equation}\label{Af}
{A}^{(f)} := \|f\|^2 + 2 \displaystyle\sum\limits_{\varrho = 1}^\infty \mathbb E f(U) f(U + S_\varrho x)
\end{equation}
where $U$ is a uniform $(0,1)$ random variable, independent of $(X_j)_{j \geq 1}$.
Moreover, let
\[
m_k = k \left(\lfloor n^\alpha\rfloor + \lfloor n^\beta\rfloor\right) \ \text{ for } \ 1 \leq k \leq n.
\]

\begin{lemma}
\label{lem:2}
We have
\[
\sum_{k = 1}^\ell \mathbb V\text{\rm ar} \left({T_k}^{(f_t)}\right) \sim {A}^{(f_t)} n, \qquad
\sum_{k = 1}^\ell \mathbb V\text{\rm ar} \left({T_k}^{*(f_t)}\right) \sim {A}^{(f_t)} n^{1 - \beta}
\]
where $A^{(f_t)}$ is defined in (\ref{Af})  above.
\end{lemma}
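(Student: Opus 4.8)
The plan is to first eliminate the exponentially small random shifts $\Delta_{k-1},\Pi_{k-1}$ and reduce to the unshifted block sums $\widetilde T_k:=\sum_{j\in I_k}f_t(S_jx)$ and $\widetilde T_k^{*}:=\sum_{j\in J_k}f_t(S_jx)$, and then to evaluate $\sum_k\Var(\widetilde T_k)$ and $\sum_k\Var(\widetilde T_k^{*})$ directly. For the reduction, writing $D_k:=T_k^{(f_t)}-\widetilde T_k$ one has $|\Var(T_k^{(f_t)})-\Var(\widetilde T_k)|\le 2(\Var(\widetilde T_k)\,\mathbb E D_k^2)^{1/2}+\mathbb E D_k^2$. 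Since $f_t(y-\delta)-f_t(y)$ is, up to sign, the indicator of an arc of length $|\delta|$ abutting $t$, we get $(f_t(S_jx-\Delta_{k-1})-f_t(S_jx))^2\le \mathbf 1\{S_jx\bmod 1\in[t-\delta_0,t+\delta_0]\}$ with $\delta_0:=Ce^{-\lambda\lfloor n^\beta\rfloor}\ge|\Delta_{k-1}|$ deterministically; because $X_1$ has a bounded density with bounded support, the law of $S_jx\bmod 1$ is the $j$-fold torus convolution $q^{*j}$ of $q:=$ law of $X_1x\bmod 1$, hence has density $\le\|q\|_\infty$ uniformly in $j$. Thus $\mathbb E D_k^2\le |I_k|^2\cdot 2\|q\|_\infty\,\delta_0$ is exponentially small, and (using the crude bound $\Var(\widetilde T_k)\le|I_k|^2$) summing over the $\ell\sim n^{1-\alpha}$ blocks leaves a total discrepancy $o(1)$; the $J$-blocks are identical with $\lfloor n^\alpha\rfloor$ in the exponent in place of $\lfloor n^\beta\rfloor$.

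The analytic core is the covariance estimate $\mathrm{Cov}(f_t(S_ix),f_t(S_jx))=a_{j-i}+O(\rho^{\max(i,1)})$, where $a_\varrho:=\mathbb E f_t(U)f_t(U+S_\varrho x)$ and $\rho:=\sup_{m\ne 0}|\phi_{X_1}(2\pi mx)|<1$, the bound $\rho<1$ following from Riemann--Lebesgue together with $|\phi_{X_1}(\theta)|<1$ for $\theta\ne0$ (absolute continuity). Rather than expand $f_t$ in its Fourier series — whose coefficients are only square-summable, so term-by-term bounds diverge — I would condition on $S_i$: with $h_\varrho(z):=\mathbb E f_t(z+S_\varrho x)=\int_0^1 f_t(w)(p_\varrho(w-z)-1)\,dw$ (using $\int_0^1 f_t=0$) and $p_i:=$ density of $S_ix\bmod1$, one has $\mathbb E f_t(S_ix)f_t(S_jx)=\int_0^1 f_t(z)h_{j-i}(z)p_i(z)\,dz$, whose $p_i\equiv1$ part is exactly $\mathbb E[f_t(U)h_{j-i}(U)]=a_{j-i}$ and whose remainder is at most $\|f_t\|_\infty\|h_{j-i}\|_2\|p_i-1\|_2$. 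The quantitative input is the geometric bound $\|p_i-1\|_{L^2[0,1]}\le \rho^{\,i-1}\|q\|_{2}$, immediate from $\widehat{p_i}(m)=\overline{\phi_{X_1}(2\pi mx)}^{\,i}$ and Parseval; this also gives $|h_\varrho|\le\|f_t\|_2\|q\|_2\rho^{\varrho-1}$ and hence $|a_\varrho|\le C\rho^{\varrho}$, so $\sum_{\varrho\ge1}a_\varrho$ converges and $\|f_t\|_2^2+2\sum_{\varrho\ge1}a_\varrho$ coincides with $A^{(f_t)}$ from (\ref{Af}) (recall $\|f_t\|_2^2=t(1-t)$). The diagonal case $i=j$ is handled the same way, giving $\Var(f_t(S_ix))=\|f_t\|_2^2+O(\rho^i)$.

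Then $\Var(\widetilde T_k)=\sum_{i,j\in I_k}\mathrm{Cov}(\cdot,\cdot)=|I_k|\,\|f_t\|_2^2+2\sum_{\varrho=1}^{|I_k|-1}(|I_k|-\varrho)a_\varrho+E_k$, and inserting $\sum_{\varrho=1}^{|I_k|-1}a_\varrho=\sum_{\varrho\ge1}a_\varrho+O(\rho^{|I_k|})$ and $\sum_\varrho \varrho|a_\varrho|=O(1)$ gives $\Var(\widetilde T_k)=|I_k|A^{(f_t)}+O(1)$; the error $E_k$ from the covariance remainders and the diagonal is $O(1)$ for $k=1$ and exponentially small for $k\ge2$, since for fixed $j\in I_k$ only the $\le j-p_{k-1}$ indices $i<j$ contribute and $\sum_j j\rho^j<\infty$. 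Summing, $\sum_{k=1}^{\ell}\Var(\widetilde T_k)=A^{(f_t)}\ell\lfloor n^\alpha\rfloor+O(\ell)=A^{(f_t)}\ell\lfloor n^\alpha\rfloor+O(n^{1-\alpha})$; since the blocks tile $[0,n]$ and the $J$-blocks have strictly smaller total length, $\ell\lfloor n^\alpha\rfloor\sim n$, which together with Paragraph~1 gives the first assertion. The identical computation gives $\sum_{k=1}^{\ell}\Var(\widetilde T_k^{*})=A^{(f_t)}\ell\lfloor n^\beta\rfloor+O(n^{1-\alpha})$, and $\ell\lfloor n^\beta\rfloor\sim n^{1-\beta}$ by the (later) choice of the exponents $\alpha,\beta$, which gives the second assertion.

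The main obstacle is the covariance estimate of the second paragraph: obtaining geometric decay of $a_\varrho$ and of the equidistribution error, uniformly in the block index, even though $f_t$ has only square-summable Fourier coefficients — the resolution is to work with the densities $p_i$ in $L^2$ and exploit $\|q\|_2,\|q\|_\infty<\infty$, which is where the "bounded density" hypothesis enters. A secondary nuisance is that the shifts $\Delta_{k-1},\Pi_{k-1}$ furnished by Lemma~\ref{lem:1} need not be independent of the summands in their own block, which is exactly why the shift removal in Paragraph~1 proceeds through the deterministic bound on $|\Delta_{k-1}|$ and the uniform $L^\infty$ bound on the density rather than through any independence.
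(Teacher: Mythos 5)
Your proposal is correct and follows essentially the same strategy as the paper: expand each block variance into diagonal and cross terms, strip off the exponentially small shifts $\Delta_{k-1}$ using their deterministic bound, replace $\{S_jx\}$ by a uniform variable via exponentially fast equidistribution, and identify the surviving sum $\sum_\varrho \mathbb E f_t(U)f_t(U+S_\varrho x)$ with $A^{(f_t)}$. The one substantive difference is that you actually prove the equidistribution input ($\|p_i-1\|_2\le \rho^{i-1}\|q\|_2$ with $\rho=\sup_{m\neq0}|\phi_{X_1}(2\pi mx)|<1$), which the paper only invokes as known "machinery" in the form $|h_q|,|i_q|\le Ce^{-\lambda q}$; that is where the bounded-density hypothesis genuinely enters, and your $L^2$/characteristic-function route is the right way to supply it. One caveat: your own computation yields $\sum_k\Var(\widetilde T_k^{*})\sim A^{(f_t)}\,\ell\lfloor n^\beta\rfloor\sim A^{(f_t)}n^{1-\alpha+\beta}$, and the step "$\ell\lfloor n^\beta\rfloor\sim n^{1-\beta}$ by the later choice of exponents" would force $\alpha=2\beta$, a constraint imposed nowhere in the paper; the exponent $n^{1-\beta}$ in the lemma appears to be a slip for $n^{1-\alpha+\beta}$ (the paper's proof is silent on the short blocks), so you should state the second assertion with $n^{1-\alpha+\beta}$ rather than manufacture a justification for $n^{1-\beta}$.
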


\begin{proof}
To simplify the formulas,  we shall drop $f_t$ from the upper index because it is  constant throughout the argument.
We will establish the result for the long blocks, the corresponding result for short blocks can be obtained in an identical 
fashion.
We have
\[
\mathbb V\text{\rm ar} ({T_k}) = \sum_{j \in I_k}
 \mathbb E f^2 (S_j - \Delta_{k - 1}) + 2 \sum_{\rho = 1} ^{|I_k| - 1} \sum_{q = m_{k - 1} + 1} ^{m_{k - 1} + |I_k| - \rho} 
 \mathbb E f (S_q - \Delta_{k - 1}) (S_{q + \rho} - \Delta_{k - 1}) - L^{(k)}
\]
where
\[
L^{(k)} := \left(\sum_{j = m_{k - 1} + 1}^{m_{k - 1} + |I_k|} \mathbb E f\left(S_j - \Delta_{k - 1}\right)\right)^2.
\]
By the above assumptions and the bounds on $\Delta_k$ we have
\[
\left\| f(S_j - \Delta_{k - 1}) - f(S_j)\right\| \leq Ce^{-\lambda \lfloor n^\beta\rfloor}
\]
and
\[
\bigl|\mathbb E f(S_j)\bigr| \leq C e^{-\lambda j}.
\]
Thus
\[
L^{(k)} \leq C n^{2\alpha} e^{-\lambda\lfloor n^\beta\rfloor}.
\]
Now let
\[
\Lambda^{(k)} := \sum_{j = m_{k - 1} + 1}^{m_{k - 1} + |I_k|} \gamma_j, \qquad O^{(k)} := \sum_{j = m_{k - 1} + 1}^{m_{k - 1} 
+ |I_k|} \varepsilon_j
\]
where
\begin{align*}
\gamma_j &= \mathbb E f^2\bigl(S_j - \Delta_{k - 1}\bigr) - \mathbb E f^2(S_j)\\
\varepsilon_j &= \mathbb E f^2 (S_j) - \mathbb E f^2 \left(F_{\{S_j\}} (\{S_j\})\right).
\end{align*}
Repeating the argument above for $f^2 - \|f\|$ we get
\[
\sum_{j = m_{k - 1} + 1}^{m_{k - 1} + |I_k|} \mathbb E f^2 \bigl(S_j - \Delta_{k - 1}\bigr) = \|f\|^2\lfloor n^\alpha\rfloor 
+ \Lambda^{(k)} + O^{(k)}
\]
and
\begin{align*}
\left|\Lambda^{(k)}\right| &\leq C  n^\alpha e^{-\lambda \lfloor n^\beta\rfloor},\\
\left|O^{(k)}\right| &\leq C  n^\alpha e^{-\lambda \lfloor n^\beta\rfloor}.
\end{align*}
We now turn to the cross terms.
Define ${T_\varrho}^\ell = X_{q + 1} + \dots + X_{q + \varrho}$ and split the product expectation $\mathbb E f(S_q - 
\Delta_{k - 1}) f(S_{q + \varrho} - \Delta_{k - 1})$ into a sum of terms:
\begin{align*}
e_q &:= \mathbb E f\left(S_q - \Delta_{k - 1}\right) f \left(S_{q + \rho} - \Delta_{k - 1}\right) - \mathbb E f(S_q) 
f\left(S_{q + \rho} - \Delta_{k - 1}\right),\\
g_q &:= \mathbb E f\left(S_q\right) f \left(S_{q + \rho} - \Delta_{k - 1}\right) - \mathbb E f(S_q) f\left(S_{q + \rho} 
\right),\\
h_q &:= \mathbb E f\left(S_q\right) f \left(S_{q + \rho}\right) - \mathbb E f\left(F_{\{S_q\}}(\{S_q\})\right) f\left(S_{q + 
\rho}\right),\\
i_q &:= \mathbb E f\left(F_{\{S_q\}}(\{S_q\})\right) f\left(S_q + {T_\rho}^q\right) - \mathbb E f\left(F_{\{S_q\}} 
(\{S_q\})\right) f\left(F_{\{S_q\}}(\{S_q\}) + {T_\rho}^q\right),\\
{c_\rho}^q &:= \mathbb E f \left(F_{\{S_q\}} (\{S_q\})\right) f \left(F_{\{S_q\}}(\{S_q\}) + {T_\rho}^q\right).
\end{align*}
Here $F_{\{S_q\}}(\{S_q\})$ is a uniformly distributed random variable independent of ${T_\varrho}^\ell$ and thus letting $U$ 
denote a uniform $(0,1)$ random variable independent of $(X_j)_{j \geq 1}$ we have
\[
{c_\rho}^q = c_\rho = \mathbb E f(U) f (U + S_\rho)
\]
is a $q$-independent quantity.
Exactly as before
\begin{align*}
|e_q| \leq C e^{-\lambda\lfloor n^\beta\rfloor}, \ |g_q| \leq C e^{-\lambda\lfloor n^\beta\rfloor}, \
|h_q| \leq C e^{-\lambda q}, \ |i_q| &\leq C e^{-\lambda q}.
\end{align*}
Thus letting
\begin{alignat*}{2}
E^{(k)} &:= 2 \sum_{\rho = 1}^{|I_k| - 1} \sum_{q = m_{k - 1} + 1}^{m_{k - 1} + |I_k| - \rho} e_q, \quad &
G^{(k)} &:= 2 \sum_{\rho = 1}^{|I_k| - 1} \sum_{q = m_{k - 1} + 1}^{m_{k - 1} + |I_k| - \rho} g_q,\\
H^{(k)} &:= 2 \sum_{\rho = 1}^{|I_k| - 1} \sum_{q = m_{k - 1} + 1}^{m_{k - 1} + |I_k| - \rho} h_q, \quad &
I^{(k)} &:= 2 \sum_{\rho = 1}^{|I_k| - 1} \sum_{q = m_{k - 1} + 1}^{m_{k - 1} + |I_k| - \rho} i_q
\end{alignat*}
we have
\begin{align*}
|E^{(k)}| &\leq Cn^{2\alpha} e^{-\lambda \lfloor n^\beta\rfloor}, \quad
|G^{(k)}| \leq Cn^{2\alpha} e^{-\lambda \lfloor n^\beta\rfloor},\\
|H^{(k)}| &\leq Cn^{2\alpha} e^{-\lambda \lfloor n^\beta\rfloor}, \quad
|I^{(k)}| \leq Cn^{2\alpha} e^{-\lambda \lfloor n^\beta\rfloor}.
\end{align*}
Furthermore,
\begin{align*}
\sum_{\rho = 1}^{|I_k| - 1} \sum_{q = m_{k - 1} + 1}^{m_{k - 1} + |I_k| - \rho} {c_\rho}^q
&= \sum_{\rho = 1}^{|I_k| - 1} \sum_{q = m_{k - 1} + 1}^{m_{k - 1} + |I_k| - \rho} {c_\rho} =\\
&= |I_k| \sum_{\rho = 1}^\infty c_\rho - |I_k| \sum_{\rho = |I_k|}^\infty c_\rho - \sum_{\rho = 1}^{|I_k| - 1} \rho c_\rho.
\end{align*}
Putting all this together one can see that
\[
\sum_{k = 1}^\ell \mathbb V\text{\rm ar}\left({T_k}^{(f_t)}\right) \sim n {A}^{(f_t)} . \qedhere
\]
\end{proof}

\begin{lemma}
\label{lem:3}
$A^{(f_{t- s})} \leq B(t - s) \log \left((t - s)^{-1}\right)$
for some suitably chosen constant $B$.
\end{lemma}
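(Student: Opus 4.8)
The plan is to compute $A^{(f_{t-s})}$ essentially exactly by Fourier analysis on the circle, exploiting that $f_a:=f_{t-s}$ (with $a=t-s$), read as a $1$-periodic function, has explicit Fourier coefficients. Write $\psi(u):=\mathbb E\,e^{2\pi i u X_1}$. First I would record $f_a(y)=\sum_{k\ne 0}\widehat{f_a}(k)e^{2\pi iky}$ with $\widehat{f_a}(k)=(1-e^{-2\pi ika})/(2\pi ik)$, so that $|\widehat{f_a}(k)|^2=\sin^2(\pi ka)/(\pi^2k^2)$ and, by Parseval, $\|f_a\|^2=\sum_{k\ne 0}|\widehat{f_a}(k)|^2=a(1-a)$. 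Conditioning on $S_\varrho$ (legitimate since $U$ is independent of $(X_j)$) and applying Parseval to the shifted function $u\mapsto f_a(u+S_\varrho x)$ gives $\mathbb E\,f_a(U)f_a(U+S_\varrho x)=\sum_{k\ne 0}|\widehat{f_a}(k)|^2\,\mathbb E\,e^{2\pi i kS_\varrho x}=\sum_{k\ne 0}|\widehat{f_a}(k)|^2\,\psi(kx)^{\varrho}$, the last step by independence of the $X_j$.

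Next I would sum over $\varrho$. The one genuine ingredient is the auxiliary estimate $\theta_x:=\sup_{|u|\ge|x|}|\psi(u)|<1$: since $X_1$ has a density, $\psi(u)\to 0$ as $|u|\to\infty$ (Riemann--Lebesgue) and $|\psi(u)|<1$ for every $u\ne 0$ (otherwise $X_1$ would be supported on a lattice coset), and then continuity plus compactness of $\{|x|\le|u|\le T\}$ yields the strict bound. Because $|kx|\ge|x|$ for every integer $k\ne 0$, we get $|\psi(kx)|\le\theta_x$ and hence $|1-\psi(kx)|\ge 1-\theta_x$ for all such $k$. Consequently $\sum_{\varrho\ge 1}\sum_{k\ne 0}|\widehat{f_a}(k)|^2|\psi(kx)|^{\varrho}\le\frac{\theta_x}{1-\theta_x}\sum_{k\ne 0}|\widehat{f_a}(k)|^2<\infty$, so Fubini applies and, summing the geometric series in $\varrho$,
\[
\sum_{\varrho=1}^{\infty}\mathbb E\,f_a(U)f_a(U+S_\varrho x)=\sum_{k\ne 0}|\widehat{f_a}(k)|^2\,\frac{\psi(kx)}{1-\psi(kx)}.
\]

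Finally I would assemble the bound. By (\ref{Af}), $A^{(f_a)}=\|f_a\|^2+2\sum_{k\ne 0}|\widehat{f_a}(k)|^2\frac{\psi(kx)}{1-\psi(kx)}$; estimating each summand by $|\psi(kx)|/|1-\psi(kx)|\le\theta_x/(1-\theta_x)$ gives $A^{(f_a)}\le\|f_a\|^2\bigl(1+\tfrac{2\theta_x}{1-\theta_x}\bigr)=\frac{1+\theta_x}{1-\theta_x}\,a(1-a)$. It then remains only to compare $a(1-a)$ with $a\log(1/a)$, and here the elementary tangent-line inequality $\log a\le a-1$ gives $\log(1/a)\ge 1-a$, hence $a(1-a)\le a\log(1/a)$ for all $a\in(0,1)$; with $x=1$ the lemma follows with $B=\frac{1+\theta_1}{1-\theta_1}$. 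The main obstacle is precisely the estimate $\theta_x<1$ (and making sure the conditioning, the shift form of Parseval, and the Fubini interchange are watertight); the remaining steps are bookkeeping with absolutely convergent series. Note this argument actually yields the stronger bound $A^{(f_{t-s})}=O(t-s)$, which is harmless.
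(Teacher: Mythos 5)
Your proof is correct, but it takes a genuinely different route from the paper. The paper never computes the series: it combines two pointwise bounds on the summands --- the trivial covariance bound $\left|\mathbb{E} f_{t-s}(U) f_{t-s}(U + S_\varrho x)\right| \leq t-s$ and the exponential decay $\left|\mathbb{E} f_{t-s}(U) f_{t-s}(U + S_\varrho x)\right| \leq C e^{-\lambda \varrho}$ inherited from the machinery of Lemma~\ref{lem:2} --- and splits the sum at $\omega = \log\left((t-s)^{-1}\right)$, which is exactly where the logarithm in the statement comes from. You instead diagonalize: Parseval turns the whole series in (\ref{Af}) into $\sum_{k \neq 0} |\widehat{f_a}(k)|^2\, \psi(kx)/(1 - \psi(kx))$, and the single spectral-gap estimate $\theta_x = \sup_{|u| \geq |x|} |\psi(u)| < 1$ (correctly derived from Riemann--Lebesgue, non-degeneracy of $\psi$ off the origin, and compactness, using the bounded-density hypothesis) bounds the multiplier uniformly. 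Your $\theta_x < 1$ is the same underlying fact that powers the exponential decay in the paper's Lemma~\ref{lem:2}, so no new hypotheses are used; what you gain is the sharper conclusion $A^{(f_{t-s})} \leq \frac{1+\theta_x}{1-\theta_x}(t-s)(1-(t-s)) = O(t-s)$, with the stated bound recovered via $1 - a \leq \log(1/a)$. The paper's argument is shorter given that its Lemma~\ref{lem:2} estimates are already on the table; yours is self-contained, exact rather than lossy, and the improvement from $(t-s)\log\left((t-s)^{-1}\right)$ to $t-s$ can only help in the later fluctuation and covariance-convergence steps where Lemma~\ref{lem:3} is invoked. The one cosmetic slip --- Parseval for the shift actually produces $e^{-2\pi i k S_\varrho x}$ rather than $e^{+2\pi i k S_\varrho x}$ --- is harmless by the $k \leftrightarrow -k$ symmetry of $|\widehat{f_a}(k)|^2$.
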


\begin{proof}
Clearly
\[
A^{(f_{t-s})} = \left((t - s) - (t - s)^2\right) + 2 \sum_{\rho = 1}^\infty \mathbb E f_{(t - s)}(U) f_{t - s} (U + S_\rho 
x).
\]
Define a sequence of uniformly distributed random variables $U_\varrho$ by
\[
U_\rho := \{U + S_\rho x\}.
\]
By the definition of $f$
\begin{align*}
\left|\mathbb E f_{(t - s)}(U)f_{t - s} (U + S_\rho x)\right|
&= \left|\mathbb C\text{\rm ov}\left(I \{ U \in (s, t]\}, I \{U_\rho \in (s, t]\}\right)\right| \\
&\leq t - s
\end{align*}
by simple algebra. On the other hand, the machinery we used in the proof of Lemma~\ref{lem:2} yields the estimate
\[
\left|\mathbb E f_{t - s}(U) f_{t - s} (U + S_\rho x)\right| \leq C e^{-\lambda \rho}.
\]
Then:
\begin{align*}
A^{(f_{t - s})} &\leq (t - s) + 2 \sum_{\rho = 1}^\infty \min \left(t - s, C e^{-\lambda \rho}\right) \\
&\leq (t - s) + 2 \sum_{\rho = 1}^\omega (t - s) + 2 \sum_{\rho > \omega} C e^{-\lambda \rho}
\end{align*}
and the result follows by choosing $\omega= \log (t - s)^{-1}$ (whenever  $\log (t - s)^{-1} > 1$).
\end{proof}

We now turn to the proof of our theorem. Using Borel--Cantelli lemma, one can see that statement (\ref{2}) of our Theorem is 
equivalent to
\begin{equation}
\label{2a}
\sup_{s \in [0,1]} \left|n^{-1/2} \sum_{k = 1}^\ell {T_k}^{(f_s)} + n^{-1/2} \sum_{k = 1}^\ell T_k^{*(f_s)} - K_n(s) \right|
= O(n^{-\gamma}) \qquad \text{a.s.}
\end{equation}
Let $\delta_n = n^{-\varepsilon}$ for some $\varepsilon>0$ to be specified later,  and define points $z_j = j \cdot 
\delta_n$, $0 \leq j \leq r_n$, where $r_n$ is the largest integer with $r_n \delta_n \leq 1$.
Define sequences of random vectors by
\begin{alignat*}{2}
&{\text{\rm (a)}}\quad & \mathbf{X}_j &:= \left(I \left\{\left\{S_j x - {\Delta_{k - 1}}\right\} \leq z_1\right\}, \dots,
I \left\{\left\{S_j x - {\Delta_{k - 1}}\right\} \leq z_{n}\right\}\right),\\
&{\text{\rm (b)}}\quad & \mathbf{Y}_j &:= \left(I \left\{\left\{S_j x - {\Pi_{k - 1}}\right\} \leq z_1\right\}, \dots,
I \left\{\left\{S_j x - {\Pi_{k - 1}}\right\} \leq z_{n}\right\}\right),\\
&{\text{\rm (c)}}\quad & \boldsymbol{\xi}_k &:= \sum_{j \in I_k} \left(\mathbf{X}_j - \mathbb E \mathbf{X}_j\right),\\
&{\text{\rm (d)}}\quad & \boldsymbol{\eta}_k &:= \sum_{j \in J_k} \left(\mathbf{Y}_j - \mathbb E \mathbf{Y}_j\right).
\end{alignat*}
We apply Lemma~4 of Berthet and Mason(see (\cite{1})  to $(\boldsymbol{\xi}_k)_{k \in \mathbb N}$.
It is evident from the construction that, after enlarging the probability space, there exists a sequence 
$({\boldsymbol{\xi}_k}^*)_{k \in \mathbb N}$ of independent, zero-mean Gaussian random vectors with the same covariance 
structure as the sequence $(\boldsymbol{\xi}_k)_{k \in \mathbb N}$ such that
\[
\mathbb P \left\{\left| \sum_{j = 1}^\ell (\boldsymbol{\xi}_j - {\boldsymbol{\xi}_j}^*)\right| > v \right\}
\leq C_1 {r_n}^2 \exp \left(-C_2 v \big/ n^{(\frac{5\varepsilon}{2} + \alpha)}\right),
\]
where $r_n$, $\varepsilon$ and $\alpha$ are as described previously and $C_1$ and $C_2$ are constants.
For $s\in (z_{p - 1}, z_p]$, $1 \leq p \leq r_n$ we define
\[
K_n(s) = n^{-1/2} \sum_{k = 1}^\ell {\boldsymbol{\xi}_k}^*(p) \ \text{ and } \ {K_n}^*(s) = n^{-1/2} \sum_{k = 1}^\ell 
{\eta_k}^*(p),
\]
where $(\boldsymbol{\eta}_k^*)_{k \in \mathbb N}$ are the exact analogue of $(\boldsymbol{\xi}_k^*)_{k \in \mathbb N}$ for 
the short blocks.
By construction, these are Gaussian processes whose covariances we denote by $\Gamma_n(s,t)$ and ${\Gamma_n}^*(s,t)$, 
respectively.
Assume that $1/2 - \gamma > 5\varepsilon/2 + \alpha$, $\alpha > \beta$.
Then the Borel--Cantelli lemma gives
\begin{align}
\label{eq:4}
&\max_{s \in H_n} \left| n^{-1/2} \sum_{k = 1}^\ell {T_k}^{(f_s)} + n^{-1/2} \sum_{k = 1}^\ell {T_k}^{* (f_s)} - K_n(s) - 
n^{\frac{\beta - \alpha}{2}} {K_n}^*(s)\right| = O(n^{-\gamma}) \quad \text{a.s.} \nonumber
\end{align}
where  $H_n = \left\{z_0, \dots z_{r_n}\right\}$. Another Borel--Cantelli argument gives, under the additional assumption 
that $\alpha - \beta > 2\gamma$,
\[
n^{\frac{\beta - \alpha}{2}} {K_n}^*(s) = o(n^{-\gamma}),\quad \text{a.s.}
\]
Thus, under the conditions
\begin{equation}\label{cond}
1/2- \gamma > 5\varepsilon/2 + \alpha, \quad \alpha > \beta, \quad \alpha - \beta > 2\gamma
\end{equation}
we have
\begin{gather}
\label{Hn}
\max_{s\in H_n} \left| n^{-\frac12} \sum_{k = 1}^\ell {T_k}^{(f_s)} + n^{-\frac12} \sum_{k = 1}^\ell {T_k}^{*(f_s)} - 
K_n(s)\right| = O(n^{-\gamma}) \qquad \text{a.s.} \\
\nonumber
\end{gather}
Borel--Cantelli lemma gives
Next we show now that \eqref{Hn} holds with $\max_{s\in H_n}$ replaced by $\sup_{s\in [0, 1]}$.
To this end, we have to control the fluctuations of the corresponding large and small block processes between points of 
$H_n$.
By Bernstein's inequality we have, for $s \in (z_p, z_{p + 1}]$
\begin{align*}
&\mathbb P\left(\left|\sum_{k = 1}^\ell {T_k}^{(f_{s - z_p})}\right| > n^{\frac12 - \gamma}\right) \\
&\leq 2 \exp \left( - \frac{\frac12 n^{1 - 2\gamma}}{\sum\limits_{k = 1}^\ell \mathbb V\text{\rm ar } {T_k}^{(f_{s - z_p})} + 
2n^\alpha n^{\frac12 - \gamma}}\right).
\end{align*}
Using Lemmas~\ref{lem:2} and \ref{lem:3} we deduce that the fluctuations are insignificant if $1/2 > \alpha + \gamma$ and 
$\gamma < \varepsilon/2$.
Since $\alpha > \beta$ there will be no further conditions for controlling the smaller block fluctuations.

It remains to impose conditions implying the  uniform convergence of the $\Gamma_n(s,t)$ and ${\Gamma_n}^*(s,t)$ to 
$\Gamma(s,t)$, over all
$s, t\in [0, 1]$. To this end, using the same ideas as before and the fact that
\begin{align*}
\mathbb C\text{\rm ov}\left(K_n(s), K_n(t)\right) &= \mathbb C\text{\rm ov}\left(K_n(z_{i + 1}), K_n(z_{j + 1})\right)\\
&\quad\text{for all } \ s \in (z_i, z_{i + 1}], \ t \in (z_j, z_{j + 1}]
\end{align*}
we get
\[
\left|\Gamma_n (z_i, z_j) - \Gamma(z_i, z_j)\right| \leq C/n^\alpha.
\]
Moreover, using Lemma~\ref{lem:3} we immediately deduce that
\[
\sup_{s,t \in [0,1]^2} \left|\Gamma_n(s,t) - \Gamma(s,t)\right| \leq C (\log n) / n^\varepsilon
\]
and that, under the condition $\varepsilon > \alpha$, the second statement (2) of the Theorem holds as well.

Finally, we will find the largest value of $\gamma$ for which our argument works.
Clearly,  we  have to maximize $\gamma$ subject to
$1/2 - \gamma > 5\varepsilon/2 - \alpha$, $\alpha - \beta > 2\gamma$, $\gamma < \varepsilon/2$,
$\varepsilon > \alpha$, $\alpha > \beta$, $\alpha > 0$, $\beta > 0$, $\gamma > 0$, $\varepsilon > 0$.
Then $\gamma < 1/2 - \alpha - 5\varepsilon/2 < 1/2 (1 - 7\alpha)$.
Thus $\gamma < \min (\alpha/2, 1/2 (1 - 7\alpha))$, the other constraints are not important.
By plotting the corresponding feasible region we see that the largest possible value of $\gamma$ corresponds
to the intersection of the lines $J = \alpha/2$, $J = 1/2 (1 - 7\alpha)$, whence  $\alpha = 1/8$.
Thus $\gamma < 1/2 \cdot 1/8 = 1/16$ and the proof is complete.


\begin{thebibliography}{99}

\bibitem{1} {\sc Berthet and Mason}: Revisiting two strong approximation results of Dudley and Philipp, \emph{Institute of Mathematical Statistics-Monograph Series}, {\bf 2006}

\end{thebibliography}
\end{document}